\newtheorem{thm}{Theorem}[section]
\newtheorem{lemma}[thm]{Lemma}
\newtheorem{prop}[thm]{Proposition}
\theoremstyle{definition}
\newtheorem{df}[thm]{Definition}
\newtheorem{nrmk}[thm]{Remark}
\theoremstyle{remark}
\renewcommand{\r}{\mathbb{R}}
\newcommand{\Z}{\mathbb{Z}}
\newcommand{\curly}[1]{\mathcal{#1}}
\newcommand{\la}{\curly{L}}
\renewcommand{\to}{\rightarrow}
\def \<{\langle}
\def \>{\rangle}
\def \*Z {{{^*}\Z}}
\def \((  {(\!(}
\def \)) {)\!)}
\def \tp{\operatorname{tp}}
\numberwithin{equation}{section}
\def \u{\mathcal U}
\def\indsym#1#2{%
  \setbox0=\hbox{$\m@th#1x$}%
  \kern\wd0%
  \hbox to 0pt{\hss$\m@th#1\mid$\hbox to 0pt{$\m@th#1^{#2}$}\hss}%
  \lower.9\ht0\hbox to 0pt{\hss$\m@th#1\smile$\hss}%
  \kern\wd0}
\def\nindsym#1#2{%
  \setbox0=\hbox{$\m@th#1x$}%
  \kern\wd0%
  \hbox to 0pt{\hss$\m@th#1\not$\kern1.4\wd0\hss}
  \hbox to 0pt{\hss$\m@th#1\mid$\hbox to 0pt{$\m@th#1^{\,#2}$}\hss}%
  \lower.9\ht0\hbox to 0pt{\hss$\m@th#1\smile$\hss}%
  \kern\wd0}
\title{Dividing and weak quasi-dimensions in arbitrary theories}
\author{Isaac Goldbring and Henry Towsner}
\thanks{Goldbring's work was partially supported by NSF CAREER grant DMS-1349399.}
\address {University of Illinois at Chicago, Department of Mathematics, Statistics, and Computer Science, Science and Engineering Offices (M/C 249), 851 S. Morgan St., Chicago, IL 60607-7045, USA}
\email{isaac@math.uic.edu}
\urladdr{www.math.uic.edu/~isaac}
\address {University of Pennsylvania, Department of Mathematics, 209 S. 33rd Street, Philadelphia, PA 19104-6395}
\email{htowsner@math.upenn.edu}
\urladdr{www.sas.upenn.edu/~htowsner}
\begin{document}

\begin{abstract}
We show that any countable model of a model complete theory has an elementary extension with a ``pseudofinite-like'' quasi-dimension that detects dividing.
\end{abstract}

\maketitle

\section{Introduction}

In a pseudofinite structure, every set $S$ has a size $|S|$, a nonstandard cardinality.  It is reasonable to say that $S$ and $T$ are ``similar in size'' if $|\log |S|-\log |T||$ is bounded (by a natural number).  This gives the notion of \emph{fine pseudofinite dimension} \cite{MR3091666,MR2833482}, the quotient of $\log|S|$ by a suitable convex set.  Garc\'ia shows \cite{2014arXiv1402.5212G} that the fine pseudofinite dimension detects dividing: roughly, if $\phi(x,b)$ divides over $\psi(x,a)$ then there is a $b'$ with $tp(b'/a)=tp(b/a)$ so that the dimension of $\phi(x,b')$ is strictly stronger than the dimension of $\psi(x,a)$.

We give a limited extension of this to model complete theories (and, via Morleyization, to any theory): any countable model whose theory is model complete embeds elementarily in a ``large'' fragment of a pseudofinite structure in such a way that the notion of dimension pulls back to the original model, and if $\phi(x,b)$ divides over $\psi(x,a)$ then there is a $b'$ in the elementary extension with $tp(b'/a)=tp(b/a)$ so that the dimension of $\phi(x,b')$ is strictly stronger than the dimension of $\psi(x,a)$.

Note that it is easy to embed a countable structure in a pseudofinite structure (embed $M$ in the ultraproduct of its finite restrictions), but that this is not an elementary embedding.  It is also easy to obtain a dimension-like function that detects dividing by linearizing the partial order on definable sets given by dividing.  The dimension here, however, is an abelian group, and even a quotient of $\r^*$.

We would like to thank Dario Garc\'ia for useful comments on an earlier draft of this note.

\section{Construction}

Let $\la$ be a countable first-order relational signature and let $T$ be a complete, model complete theory in $\la$.  Set $\la':=\la\cup \{V_\alpha \ : \ \alpha<\omega+\omega\}$, where the $V_\alpha$ are fresh unary relation symbols.  All $\la'$-structures considered will have the property that the interpretations of the $V_\alpha$'s will form a chain.

Fix an enumeration of quantifier free formulas in $\la$ with distinguished choices of free variables $\vec x,\vec y$, $\varphi_i(\vec x,\vec y)$, so that each formula $\varphi_i$ appears infinitely often.  Fix an ordering of $\omega+\omega$ of order-type $\omega$.  Combining these, we obtain an ordering of order-type $\omega$ of pairs $\sigma_i=(\varphi_i(\vec x,\vec y),\alpha_i)$.  We write $\alpha(\sigma_i)$ for $\alpha_i$.

We say that an $\la$-structure $M$ \emph{strongly satisfies} $\sigma_i$ if, whenever $\vec a\in V_{\alpha_i}^{M}$ is such that there is $N\supseteq M$ with $N\models T$ and $\vec b\in N$ such that $N\models \varphi_i(\vec a,\vec b)$ 
then there is a $\vec b\in V_{\alpha_i+1}^{M}$ such that $M\models \varphi_i(\vec a,\vec b)$.

For each $n\in \omega$, we define an $\la'$-structure $M_n\models T_\forall$ with the property that if $i<n$, $M_n$ strongly satisfies $\sigma_i$.  Let $M_0$ denote a one-element substructure of a model of $T$ whose unique element satisfies each $V_n$.  

Suppose we have constructed $M_{n-1}$.  Consider the first $n$ pairs $\sigma_0,\ldots,\sigma_{n-1}$ and fix a permutation $\sigma_{r_0},\ldots,\sigma_{r_{n-1}}$  so that $i\leq j$ implies that $\alpha(\sigma_{r_i})\leq\alpha(\sigma_{r_j})$.  We construct $\la'$-structures $M_n^i\models T_\forall$, for $i=0,\ldots,n$, by recursion on $i$ in such a way that $M_n^i$ strongly satisfies $\sigma_{r_0},\ldots,\sigma_{r_{i-1}}$.  We will then set $M_n:=M_n^n$.

Let $M_n^0=M_{n-1}$.  Suppose that $M_n^i$ has been constructed and set $\alpha:=\alpha(\sigma_{r_i})$ and let $\varphi(\vec x,\vec y)$ be the formula in $\sigma_{r_i}$.  Enumerate the tuples of length $|\vec x|$ in $V_\alpha^{M_n^i}$ as $\vec a_1,\ldots,\vec a_k$.  We now recursively construct a sequences of models $M_n^{i,j}$; we begin with $M_n^{i,0}=M_n^i$.  Given $M_n^{i,j}$, we proceed as follows:
\begin{itemize}
\item If there is a $\vec b\in M_n^{i,j}$ such that $M_n^{i,j}\models \phi(\vec a_j,\vec b)$ and $\vec b$ is contained in $V_{\alpha+1}^{M_n^{i,j}}$ then $M_n^{i,j+1}=M_n^{i,j}$,
\item Otherwise, if there is an extension $M$ of $M_n^{i,j}$ and a $\vec b$ in $M$ such that $M\models\phi(\vec a_j,\vec b)$ then $M_n^{i,j+1}=M_n^{i,j}\cup\{\vec b\}$ and any element of $\vec b$ which is not in $V_{\alpha+1}^{M_n^{i,j}}$ is in $V_{\alpha+1}^{M_n^{i,j+1}}\setminus V_\alpha^{M_n^{i,j+1}}$,
\item Otherwise, set $M_n^{i,j+1}=M_n^{i,j}$.
\end{itemize}
Set $M_n^{i+1}:=M_n^{i,k+1}$.  Note that $M_n^{i+1}$ strongly satisfies $\sigma_{r_0},\ldots, \sigma_{r_i}$ as desired because $V_\alpha^{M_n^{i+1}}=V_\alpha^{M_n^i}$. 

Let $M:=\prod_\u M_n$.  By definition, $V_\alpha^M=\{x\in M\ :\ M\vDash x\in V_\alpha\}$.  We write $V_{<\omega}^M$ for $\bigcup_{n<\omega}V_n^M$.  We define $M'=\{x\in M\mid\exists n\  x\in V_{\omega+n}^M\}$.  Note that $V_\alpha^M=V_\alpha^{M'}$ for all $\alpha$.

Since $T$ is model-complete, it has a set of $\forall\exists$-axioms.  Suppose that $\forall \vec x\exists \vec y\varphi(\vec x,\vec y)$ is such an axiom.  Fix $\alpha<\omega+\omega$ and take $i$ such that $\sigma_i=(\varphi(\vec x,\vec y),\alpha)$.  Fix $n>i$ and consider $N\models T$ such that $M_n\subseteq N$.  Since $N\models T$ and $M_n$ strongly satisfies $\sigma_i$, we have $M_n\models \forall \vec x\in V_\alpha \exists \vec y \in V_{\alpha+1} \varphi(\vec x,\vec y)$.  This proves:


\begin{prop}
$V_{<\omega}^M$ and $M'$ are both models of $T$.
\end{prop}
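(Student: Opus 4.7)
The plan is to combine \L o\'s's theorem with the strong-satisfaction property already established for the $M_n$'s, and then verify the $\forall\exists$-axioms of $T$ in each of $V_{<\omega}^M$ and $M'$. The key input is the displayed observation just before the proposition: for every $\forall\exists$-axiom $\forall\vec x\exists\vec y\,\varphi(\vec x,\vec y)$ of $T$ (with $\varphi$ quantifier-free) and every $\alpha<\omega+\omega$, there is an $i$ with $\sigma_i=(\varphi,\alpha)$, and for all $n>i$ the model $M_n$ strongly satisfies $\sigma_i$, so
$$M_n \models \forall\vec x \in V_\alpha\; \exists\vec y\in V_{\alpha+1}\;\varphi(\vec x,\vec y).$$
By \L o\'s's theorem, this bounded-quantifier statement holds in $M$ for every such $\varphi$ and every $\alpha<\omega+\omega$.

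Next I would note that because $\la$ is purely relational, $V_{<\omega}^M$ and $M'$ are substructures of $M\restriction\la$ as soon as they are nonempty, and they are nonempty because the base element coming from $M_0$ lies in every $V_\alpha$. Moreover, quantifier-free $\la$-formulas are absolute between a substructure and $M$.

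It then suffices to verify each $\forall\exists$-axiom $\forall\vec x\exists\vec y\,\varphi(\vec x,\vec y)$ in each of the two structures. For $V_{<\omega}^M$, given $\vec a\in V_{<\omega}^M$, choose $n<\omega$ with $\vec a\in V_n^M$; the bounded statement applied to $\alpha=n$ then produces $\vec b\in V_{n+1}^M\subseteq V_{<\omega}^M$ with $M\models\varphi(\vec a,\vec b)$, and absoluteness transfers this to $V_{<\omega}^M$. For $M'$, given $\vec a\in M'$, choose $n<\omega$ with $\vec a\in V_{\omega+n}^M$; applying the bounded statement to $\alpha=\omega+n$ yields $\vec b\in V_{\omega+n+1}^M\subseteq M'$ with $M\models\varphi(\vec a,\vec b)$, and again absoluteness gives $M'\models\varphi(\vec a,\vec b)$.

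The only mildly subtle point, already flagged in the paragraph before the proposition, is the invocation of model completeness to obtain a $\forall\exists$-axiomatization; without it one would only conclude that $V_{<\omega}^M$ and $M'$ are substructures of models of $T$. Beyond that, the argument is bookkeeping, and I do not anticipate a genuine obstacle.
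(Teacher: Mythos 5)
Your proposal is correct and follows essentially the same route as the paper: it establishes the bounded statements $\forall\vec x\in V_\alpha\,\exists\vec y\in V_{\alpha+1}\,\varphi(\vec x,\vec y)$ in the $M_n$ via strong satisfaction of the $\sigma_i$, transfers them to $M$ by {\L}o\'s's theorem, and verifies the $\forall\exists$-axioms in the unions $V_{<\omega}^M$ and $M'$. You merely make explicit the ultraproduct transfer and the quantifier-free absoluteness steps that the paper leaves implicit.
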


There is no guarantee that $V_\omega^M$ is a model of $T$.  Note that $M'$ is existentially closed in $M$.  Indeed, $M\models T_\forall$ and every model of $T$ is an existentially closed model of $T_\forall$.

\

\begin{lemma}
Suppose that $p(\vec x)$ is a countable set of existential $\la$-formulae with parameters from $V^M_\alpha$ that is finitely satisfiable in $M'$ (equivalently, in $M$).  Then there is $\vec c\in V^M_{\alpha+1}$ such that $M'\models p(\vec c)$.
\end{lemma}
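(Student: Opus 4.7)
The plan is to extend $p$ to a type in the larger signature $\la'$, realize it in the ultraproduct $M$ via $\aleph_1$-saturation, and then transfer back to $M'$ using existential closedness. Let $p'(\vec x) := p(\vec x) \cup \{V_{\alpha+1}(x_j) : x_j \text{ a variable of } \vec x\}$. This is a countable $\la'$-type with countable parameter set (its parameters are those of $p$, which lie in $V_\alpha^M$). Since $M = \prod_\u M_n$ is an ultrapower by a nonprincipal (hence countably incomplete) ultrafilter on $\omega$ in the countable signature $\la'$, $M$ is $\aleph_1$-saturated, so it suffices to prove that $p'$ is finitely satisfiable in $M$.

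The conjunction of finitely many members of $p$ is equivalent to a single existential formula $\exists \vec y\, \chi(\vec x, \vec y, \vec a)$ with $\vec a \in V_\alpha^M$ and $\chi$ quantifier-free. Thus the key claim is: if such a formula holds of some tuple in $M$, then there are $\vec c, \vec d \in V_{\alpha+1}^M$ with $M \models \chi(\vec c, \vec d, \vec a)$. To prove this, regard $\chi$ as the formula $\chi'(\vec u, \vec z_1, \vec z_2) := \chi(\vec z_1, \vec z_2, \vec u)$ in the enumeration. Since every (formula, ordinal) pair appears in the enumeration, $(\chi', \alpha) = \sigma_{i^*}$ for some $i^*$; for almost all $n$ (with respect to $\u$) we have $n > i^*$, so $M_n$ strongly satisfies $\sigma_{i^*}$.

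Given $\vec c^*, \vec d^* \in M$ with $M \models \chi(\vec c^*, \vec d^*, \vec a)$, \L o\'s's theorem provides coordinatewise representatives such that for almost all $n$, $\vec a_n \in V_\alpha^{M_n}$ and $M_n \models \chi(\vec c^*_n, \vec d^*_n, \vec a_n)$. Because $M_n \models T_\forall$, $M_n$ embeds into some $N_n \models T$, into which the quantifier-free formula $\chi(\vec c^*_n, \vec d^*_n, \vec a_n)$ persists. This gives the hypothesis of strong satisfaction for $\sigma_{i^*}$ at $M_n$, producing $\vec c'_n, \vec d'_n \in V_{\alpha+1}^{M_n}$ with $M_n \models \chi(\vec c'_n, \vec d'_n, \vec a_n)$. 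A final application of \L o\'s yields the desired $\vec c', \vec d' \in V_{\alpha+1}^M$.

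With finite satisfiability of $p'$ in hand, $\aleph_1$-saturation of $M$ provides $\vec c \in V_{\alpha+1}^M$ realizing $p$ in $M$. Since $\alpha + 1 < \omega + \omega$, we have $V_{\alpha+1}^M \subseteq V_{\omega+k}^M \subseteq M'$ for some $k$, and the parameters of $p$ already lie in $V_\alpha^M \subseteq M'$. Because $p$ consists of existential formulas and $M'$ is existentially closed in $M$, we conclude $M' \models p(\vec c)$. The principal difficulty is the key claim: converting an arbitrary witness in $M$ into one inside $V_{\alpha+1}^M$ succeeds precisely because the construction arranges strong satisfaction of the relevant $\sigma_{i^*}$ in almost every factor, and because quantifier-free formulas transfer from $M_n$ into any $T$-model extending it.
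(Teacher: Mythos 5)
Your proof is correct and follows essentially the same route as the paper's: form $p'=p\cup\{\vec x\in V_{\alpha+1}\}$, reduce finite satisfiability to quantifier-free instances whose witnesses (including those for the inner existential quantifiers) can be pushed into $V_{\alpha+1}$ via strong satisfaction of the appropriate $\sigma_{i^*}$ in almost every factor, apply \L o\'s and countable saturation, and descend to $M'$ by existential closedness. Your handling of the variable-slot bookkeeping ($\chi'(\vec u,\vec z_1,\vec z_2):=\chi(\vec z_1,\vec z_2,\vec u)$) and of the embedding of $M_n$ into a model of $T$ is slightly more explicit than the paper's, but the argument is the same.
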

\begin{proof}
Let $p'(\vec x):=p(\vec x)\cup \{\vec x\in V_{\alpha+1}\}$.  Since $M$ is countably saturated, it suffices to show that $p'(\vec x)$ is finitely satisfiable in $M$.  Indeed, we then get $\vec c\in V_{\alpha+1}^M$ such that $M\models p'(c)$,  Since the formulae in $p$ are existential and, to show that $M\models \exists \vec x\in V_{\alpha+1}\exists \vec z\phi(\vec x,\vec z,\vec a)$ it clearly suffices to show $M\models \exists \vec x\in V_{\alpha+1}\exists \vec z\in V_{\alpha+1}\phi(\vec x,\vec z,\vec a)$, it suffices to consider quantifier-free formulae.

So it suffices to show that, if $\varphi(\vec x,\vec a)$ is a quantifier-free formula with parameters from $V_\alpha^M$ such that $M'\models \exists \vec x \varphi(\vec x,\vec a)$, then $M'\models \exists \vec x\in V_{\alpha+1}\varphi(\vec x,\vec a)$.  Since $M\models \exists \vec x \varphi(\vec x,\vec a)$, we get $M_n\models \exists \vec x \varphi(\vec x,\vec a_n)$ for $\u$-almost all $n$, where $(\vec a_n)$ is a representative sequence for $\vec a$.  Fix $i$ such that $\sigma_i=(\varphi(\vec x,\vec y),\alpha)$.  Since $M_n$ strongly satisfies $\sigma_i$ for $n>i$, it follows that, for $\u$-almost all $n$, we can find $c_n\in V_{\alpha+1}^{M_n}$ such that $M_n\models \varphi(\vec c_n,\vec a_n)$.
\end{proof}

\begin{lemma}
Suppose $N$ is a model of $T$ and $A\subseteq N$ is countable.  Then there is $A'\subseteq V^{M'}_{<\omega}$ such that $\tp^{N}_{\la}(A)=\tp^{M'}_{\la}(A')$.
\end{lemma}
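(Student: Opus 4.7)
I will enumerate $A = \{a_n : n \in \omega\}$ and build $(a'_n)$ recursively so that $a'_n \in V^M_{n+1}$ and $\tp^N_\la(a_0, \ldots, a_n) = \tp^{M'}_\la(a'_0, \ldots, a'_n)$ for every $n$; setting $A' := \{a'_n : n \in \omega\}$ then gives $A' \subseteq V^M_{<\omega}$ with the required type. Because the $V_\alpha$'s form a chain, at stage $n+1$ the previously chosen $a'_0, \ldots, a'_n$ all lie in $V^M_{n+1}$, so the parameters for each application of the previous lemma sit in a single $V^M_\alpha$ with $\alpha < \omega$; this is what keeps the construction inside $V^M_{<\omega}$.

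At stage $n+1$, define
\[
p_{n+1}(x) := \{\varphi(x, a'_0, \ldots, a'_n) : \varphi(x, \vec y) \text{ existential in } \la,\ N \models \varphi(a_{n+1}, a_0, \ldots, a_n)\}.
\]
A finite conjunction of members of $p_{n+1}$ has the form $\chi(x, a'_0, \ldots, a'_n)$ with $\chi$ existential, and $N \models \exists x\, \chi(x, a_0, \ldots, a_n)$; since this is a $\la$-formula satisfied by $(a_0, \ldots, a_n)$ in $N$, the inductive hypothesis yields $M' \models \exists x\, \chi(x, a'_0, \ldots, a'_n)$. Thus $p_{n+1}$ is a countable set of existential $\la$-formulae with parameters from $V^M_{n+1}$ that is finitely satisfiable in $M'$, and the previous lemma produces $a'_{n+1} \in V^M_{n+2}$ realizing it. The base case $n=0$ is identical, with $p_0$ parameter-free and finitely satisfiable in $M'$ because $M' \equiv N$.

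To verify $\tp^N_\la(a_0, \ldots, a_{n+1}) = \tp^{M'}_\la(a'_0, \ldots, a'_{n+1})$, I use that model completeness of $T$ makes every $\la$-formula (and hence its negation) equivalent modulo $T$ to an existential formula, so the complete $\la$-type of a tuple in a model of $T$ is determined by its existential type. By construction, the existential formulas true of $(a_0, \ldots, a_{n+1})$ in $N$ hold of $(a'_0, \ldots, a'_{n+1})$ in $M'$. For the converse: if $M' \models \varphi(a'_0, \ldots, a'_{n+1})$ with $\varphi$ existential but $N \not\models \varphi(a_0, \ldots, a_{n+1})$, pick an existential $\chi$ with $T \vdash \chi \leftrightarrow \neg\varphi$; then $\chi(x, a'_0, \ldots, a'_n) \in p_{n+1}$, so $M' \models \chi(a'_0, \ldots, a'_{n+1})$, i.e.\ $M' \models \neg\varphi(a'_0, \ldots, a'_{n+1})$, contradicting the hypothesis on $\varphi$.

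\textbf{Main obstacle.} The semantic content — finite satisfiability of each $p_{n+1}$ in $M'$ and the existentials-determine-types observation — will be essentially forced by model completeness of $T$ and by $N \equiv M'$. The delicate point, and the reason one needs the previous lemma rather than mere countable saturation of $M$, is the bookkeeping with the $V_\alpha$ indices that keeps each $a'_n$ inside $V^M_{n+1}$, so that $A'$ lands in $V^M_{<\omega}$ rather than only in $M'$.
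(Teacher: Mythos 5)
Your proof is correct and follows essentially the same route as the paper's: enumerate $A$, realize the existential type of $a_{n+1}$ over $a'_0,\ldots,a'_n\in V^M_{n+1}$ inside $V^M_{n+2}$ via the previous lemma, and use model completeness to upgrade agreement of existential types to agreement of complete types. The paper's proof is just a terser version of the same induction, so no further comparison is needed.
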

\begin{proof}
Enumerate $A=\{a_0,a_1,\ldots\}$ and construct $A'\subseteq M'$ inductively: given $a'_0,\ldots,a'_n\in V^{M'}_{n+1}$ with $\tp_{\la}^{M'}(a'_0,\ldots,a'_n)=tp_{\la}^{N}(a_0,\ldots,a_n)$, there exists, by the previous lemma, an $a'_{n+1}\in V^{M'}_{n+2}$ with $\tp_{\la}^{M'}(a'_{n+1}/a'_0,\ldots,a'_n)=\tp_{\la}^{N}(a_{n+1}/a_0,\ldots,a_n)$.  (Note that, since $T$ is model-complete, we can replace the complete types with the complete existential types.)
\end{proof}

We now fix a countable model $N$ of $T$ and take $A=N$ in the above lemma, yielding an elementary embedding $a\mapsto a':N\to V_{<\omega}^M$ with image $N'$.

\begin{df}
For an $\la$-formula $\varphi(\vec x)$, we let $\varphi_\omega(\vec x):=\varphi(\vec x)\wedge (\vec x\in V_\omega)$.
\end{df}

\begin{lemma}
Suppose that $\varphi(\vec x,\vec y)$ and $\psi(\vec x,\vec z)$ are  existential $\la$-formulae.  If $N\models \forall \vec x (\varphi(\vec x,\vec a)\leftrightarrow \psi(\vec x,\vec b))$, then $M\models \forall \vec x(\varphi_\omega(\vec x,\vec a')\leftrightarrow \psi_\omega(\vec x,\vec b'))$.
\end{lemma}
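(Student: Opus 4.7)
The plan is to transfer the equivalence from $N$ to $M$ by routing through the intermediate model $M'$ and invoking existential closure at the end. The chain $N \to V_{<\omega}^M \subseteq M' \subseteq M$ is the skeleton of the argument.

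First, I would upgrade the elementary embedding $a \mapsto a' \colon N \to V_{<\omega}^M$ to an elementary embedding $N \to M'$. By the preceding proposition, both $V_{<\omega}^M$ and $M'$ are models of $T$, and $V_{<\omega}^M \subseteq M'$; since $T$ is model complete, this inclusion is elementary, so $V_{<\omega}^M \preceq M'$. Composing with the given elementary embedding, $N \to M'$ is elementary as well, and applying this to the hypothesis yields $M' \models \forall \vec x(\varphi(\vec x, \vec a') \leftrightarrow \psi(\vec x, \vec b'))$.

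Next, fix $\vec c \in V_\omega^M$. Because the $V_\alpha^M$ form a chain and $\omega \leq \omega + n$ for every $n$, we have $V_\omega^M \subseteq V_{\omega+n}^M$ for all $n$, hence $V_\omega^M \subseteq M'$; in particular $\vec c \in M'$. Instantiating the universal statement from the previous step gives $M' \models \varphi(\vec c, \vec a') \leftrightarrow \psi(\vec c, \vec b')$.

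Finally, I would pull this equivalence back to $M$ using that $M'$ is existentially closed in $M$, as noted in the paper after Proposition 2.1. The formulas $\varphi(\vec c, \vec a')$ and $\psi(\vec c, \vec b')$ are existential with all parameters lying in $M'$; existential formulas are preserved upward by the inclusion $M' \subseteq M$ and downward by existential closure, so their truth values are the same in $M$ and in $M'$. This gives $M \models \varphi(\vec c, \vec a') \leftrightarrow \psi(\vec c, \vec b')$, and combining with $\vec c \in V_\omega^M$ yields $M \models \varphi_\omega(\vec c, \vec a') \leftrightarrow \psi_\omega(\vec c, \vec b')$, as desired. The one step that requires a moment's attention is the promotion in the first paragraph—recognizing that model completeness allows the codomain to be taken as $M'$ rather than $V_{<\omega}^M$, so that elements of $V_\omega^M$ can serve as admissible parameters; after that, the existential closure transfer is routine.
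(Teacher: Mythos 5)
Your proof is correct and follows essentially the same route as the paper's: push the equivalence from $N$ to $V_{<\omega}^M$ by elementarity, to $M'$ via $V_{<\omega}^M\preceq M'$ (model completeness), and then transfer the instance at $\vec c\in V_\omega^M\subseteq M'$ between $M'$ and $M$ using that the formulae are existential and $M'$ is existentially closed in $M$. Your write-up just makes explicit the justifications the paper leaves implicit in its final sentence.
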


\begin{proof}
Since $a\mapsto a'$ is elementary, we get $V_{<\omega}^M\models \forall \vec x (\varphi(\vec x,\vec a')\leftrightarrow \psi(\vec x,\vec b'))$.  Since $V_{<\omega}^M\preceq M'$, the same equivalence holds in $M'$.  Finally, if $\vec c \in V_\omega^M$, then $M\models \varphi_\omega(\vec c,\vec a')$ if and only if $M'\models \varphi_\omega(\vec c,\vec a')$ and likewise with $\psi_\omega$.
\end{proof}

We recall the notion of pseudofinite dimension, especially as considered in \cite{MR3091666,MR2833482}.  Since $M$ is an ultraproduct of finite sets, any definable set $D$ has a nonstandard cardinality $|D|$ in $\mathbb{R}^*$ (the ultrapower of the reals).  We let $\mathcal{C}$ be the convex hull of $\mathbb{Z}$ in $\mathbb{R}^*$.  Then for any definable set $X$, we can define
\[\delta_M(X)=\log|X|/\mathcal{C},\]
the image of $\log|X|$ in $\mathbb{R}^*/\mathcal{C}\cup\{-\infty\}$ (where $\log|X|=-\infty$ if $|X|=0$).  This is the \emph{fine pseudofinite dimension}.

The fine pseudofinite dimension satisfies the quasi-dimension axioms:
\begin{itemize}
\item $\delta_M(\emptyset)=-\infty$ and $\delta_M(X)>-\infty$ implies $\delta_M(X)\geq 0$,
\item $\delta_M(X\cup Y)=\max\{\delta_M(X),\delta_M(Y)\}$,
\item For any definable function $f:X\rightarrow Z$ and every $\alpha\in\mathbb{R}^*/\mathcal{C}\cup\{-\infty\}$, if $\delta_M(f^{-1}(z))\leq\alpha$ for all $z\in Z$ then $\delta_M(X)\leq\alpha+\delta_M(Z)$.
\end{itemize}

One of the features of fine pseudofinite dimension is that if we fix any definable set $X$, we may define a measure $\mu_X(Y)$ on definable $Y$ by $\mu_X(Y)=st(\frac{|Y|}{|X|})$ so that $\delta_M(Y)=\delta_M(X)$ if and only if $\mu_X(Y)\in(0,\infty)$.

 In light of the lemma above, the following definition makes sense. 
\begin{df}
Suppose $X\subseteq N^k$ is definable.  Without loss of generality, we may suppose that $X$ is defined by $\varphi(\vec x,\vec a)$, where $\varphi(\vec x,\vec y)$ is quantifier-free.  We then define $\delta_N(X)=\delta_M(\varphi_\omega(\vec x,\vec a'))$, where the latter dimension is computed in the pseudofinite structure $M$.
\end{df}

\begin{lemma}
  $\delta_N(X\times Y)=\delta_N(X)+\delta_N(Y)$
\end{lemma}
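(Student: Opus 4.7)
The plan is to reduce the equation to a direct cardinality computation inside the pseudofinite structure $M$, exploiting the fact that the $\omega$-relativization interacts well with conjunction on disjoint variable tuples.

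First, fix defining formulas $X = \varphi(N, \vec a)$ and $Y = \psi(N, \vec b)$ of the sort the definition of $\delta_N$ calls for, with $\vec x$ of length $k$ and $\vec y$ of length $l$ and the two variable tuples disjoint. Then $X \times Y \subseteq N^{k+l}$ is defined by the formula $\chi(\vec x, \vec y) := \varphi(\vec x, \vec a) \wedge \psi(\vec y, \vec b)$, which is again of the required syntactic form, so by definition
\[\delta_N(X \times Y) = \delta_M\bigl(\chi_\omega(\vec x, \vec y, \vec a', \vec b')\bigr).\]

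Next, I would verify the key identity
\[\chi_\omega(\vec x, \vec y, \vec a', \vec b') \;\equiv\; \varphi_\omega(\vec x, \vec a') \wedge \psi_\omega(\vec y, \vec b').\]
This is immediate from the definition of the $\omega$-relativization: the condition $(\vec x, \vec y) \in V_\omega$ means that every coordinate of $\vec x$ and every coordinate of $\vec y$ lies in $V_\omega$, which splits precisely as $\vec x \in V_\omega$ and $\vec y \in V_\omega$. Consequently the set defined by $\chi_\omega$ in $M$ is literally the Cartesian product of the sets defined by $\varphi_\omega$ and $\psi_\omega$.

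Finally, since $M = \prod_{\u} M_n$ is an ultraproduct of finite structures, nonstandard cardinality is multiplicative over Cartesian products by \L{}o\'s's theorem applied coordinatewise, so $|\chi_\omega(M)| = |\varphi_\omega(M)| \cdot |\psi_\omega(M)|$ in $\r^*$. Applying $\log$ converts this to a sum, and the quotient map $\r^* \to \r^*/\C$ is a group homomorphism, yielding $\delta_M(\chi_\omega) = \delta_M(\varphi_\omega) + \delta_M(\psi_\omega) = \delta_N(X) + \delta_N(Y)$, with the usual convention that $-\infty + \alpha = -\infty$ handling the degenerate case where $X$ or $Y$ is empty.

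There is no real obstacle here; the only subtlety worth being explicit about is the variable-disjoint splitting of $V_\omega$ that converts $\chi_\omega$ into a product, since this is the one place where the definition of $\delta_N$ (which lives at a fixed sort) has to be reconciled with the two factors of the product living at different sorts.
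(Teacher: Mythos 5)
Your proposal is correct and follows essentially the same route as the paper: define $X\times Y$ by the conjunction $\varphi(\vec x,\vec a)\wedge\psi(\vec y,\vec b)$, observe that its $\omega$-relativization splits as $\varphi_\omega\wedge\psi_\omega$ (hence defines the literal Cartesian product in $M$), and conclude by additivity of $\delta_M$ over products. The paper compresses the last step into the phrase ``using the pseudofinite axioms for $\delta_M$,'' whereas you justify it directly from multiplicativity of nonstandard cardinality; this is only a difference in level of detail, not of substance.
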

\begin{proof}
  Suppose $X$ and $Y$ are defined by $\varphi(\vec x,\vec a)$ and $\psi(\vec y,\vec b)$ respectively.  Then $X\times Y$ is defined by $\rho(\vec x,\vec y,\vec a,\vec b)=\varphi(\vec x,\vec a)\wedge\psi(\vec y,\vec b)$.  Then
\[\delta_N(X\times Y)=\delta_M(\rho_\omega(\vec x,\vec y,\vec a,\vec b))=\delta_M(\varphi_\omega(\vec x,\vec a))+\delta_M(\psi_\omega(\vec y,\vec b))=\delta_N(X)+\delta_N(Y).\]
using the pseudofinite axioms for $\delta_M$.
\end{proof}



$\delta_N$ need not satisfy the final quasi-dimension axiom, however---it is possible that there are many values $z\in Z_\omega$ so that $\delta(f^{-1}(z))$ is large and so $\delta_M(X_\omega)$ is large as well, but that none of these are in the image of $M$, so $\delta_N(X)$ is large even though $\delta_N(f^{-1}(z))$ is small for all $z\in Z$.




Nonetheless, there is a connection between $\delta_N$ and dividing, essentially the one shown by Garc\'ia in \cite{2014arXiv1402.5212G} for pseudofinite dimension.
\begin{prop}
Suppose that $\psi(x,a)$ and $\varphi(x,b)$ are existential $\la$-formulae with parameters from $V_\omega^M$ such that $\varphi(x,b)$ implies $\psi(x,a)$ and $\varphi(x,b)$ divides over $a$.  Then there is $b^\#\in V_\omega^M$ with $b^\#\equiv_{\la,a}b$ and $\delta_M(\varphi(x,b^\#))<\delta_M(\psi(x,a))$.
\end{prop}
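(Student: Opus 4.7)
Following García's approach for pseudofinite dimension, the plan is to construct inside $V_\omega^M$ an internal $\mathcal{L}$-indiscernible sequence $(\vec z_i)_{i<K}$ of nonstandard length $K$ over $a$, with first term having the $\mathcal{L}$-type of $b$ over $a$ and with $\{\varphi(x,\vec z_i):i<K\}$ internally $k$-inconsistent; then internal averaging inside $\psi(x,a)$ yields some $i$ with $|\varphi(x,\vec z_i)|$ strictly infinitesimal relative to $|\psi(x,a)|$, and we set $b^\#:=\vec z_i$. The main obstacle is the construction itself: the realization lemma only accepts countable sets of \emph{existential} $\mathcal{L}$-formulas, so model completeness must be used to reduce both the ``same-type'' and ``indiscernibility'' requirements to existential fragments.

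Fix $n<\omega$ with $a\in V_n^M$. Since $\varphi(x,b)$ divides over $a$ in $T$, fix an $\mathcal{L}$-indiscernible sequence $(c_i)_{i<\omega}$ over $a$ in a sufficiently saturated $N^*\succeq M'$ with $c_0\equiv_{\mathcal{L},a}b$ and $k$ witnessing $k$-inconsistency of $\{\varphi(x,c_i)\}$. For each finite $L$, the existential $\mathcal{L}$-type of $(c_0,\ldots,c_{L-1})$ over $a$ is a countable set of existential $\mathcal{L}$-formulas in parameters from $V_n^M$, finitely satisfiable in $M'$ because $M'\preceq N^*$ by model completeness. By the previous lemma it is realized in $V_{n+1}^M$; and by model completeness the realization satisfies the full $\mathcal{L}$-type of $(c_0,\ldots,c_{L-1})$ over $a$ in $M'$. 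Enumerate $\mathcal{L}$-formulas as $\theta_0,\theta_1,\ldots$; the internal statement in the variable $N$ asserting the existence of a tuple $(\vec z_0,\ldots,\vec z_{N-1})\in V_{n+1}^N$ that is $\theta_0,\ldots,\theta_N$-indiscernible over $a$, whose first term agrees with $b$ on $\theta_0,\ldots,\theta_N$, and with $\{\varphi(x,\vec z_i)\}_{i<N}$ $k$-inconsistent, is first-order in $N$ and holds at every standard $N$. By overspill in the $\aleph_1$-saturated $M$ it holds at some nonstandard $K$. Fix such $(\vec z_i)_{i<K}$; externally it is fully $\mathcal{L}$-indiscernible over $a$ with $\vec z_0\equiv_{\mathcal{L},a}b$, and internally $\{\varphi(x,\vec z_i)\}_{i<K}$ is $k$-inconsistent in $M$.

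Finally, because $\chi(\vec y):=\forall x(\varphi(x,\vec y)\to\psi(x,a))$ is an $\mathcal{L}$-formula true of $b$, the same-type property gives $\varphi(x,\vec z_i)\subseteq\psi(x,a)$ for each $i<K$. Combined with $k$-inconsistency, each element of $\psi(x,a)$ lies in at most $k-1$ of the sets $\varphi(x,\vec z_i)$, so internally $\sum_{i<K}|\varphi(x,\vec z_i)|\le(k-1)|\psi(x,a)|$ in $M$. Internal pigeonhole yields some $i<K$ with $|\varphi(x,\vec z_i)|\le(k-1)|\psi(x,a)|/K$. Taking $\log$ and reducing modulo $\mathcal{C}$, the bound $\log(k-1)\in\mathcal{C}$ is absorbed but $\log K\notin\mathcal{C}$ (positive infinite), so $\delta_M(\varphi(x,\vec z_i))<\delta_M(\psi(x,a))$ strictly. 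Setting $b^\#:=\vec z_i\in V_{n+1}^M\subseteq V_\omega^M$ gives the required witness, with $b^\#\equiv_{\mathcal{L},a}b$ by indiscernibility.
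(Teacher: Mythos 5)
Your overall strategy---build a single hyperfinite, internally $k$-inconsistent family of conjugates of $b$ inside $V_\omega^M$ and divide by its nonstandard length---is a recognizable alternative to the paper's argument, and your final counting step (absorb $\log(k-1)$ into $\mathcal{C}$, note $\log K\notin\mathcal{C}$) would be fine if such a family existed. The genuine gap is the overspill step that is supposed to produce it. The condition you overspill on, ``there is a tuple $(\vec z_0,\dots,\vec z_{N-1})$ in $V_{n+1}$ that is $\theta_0,\dots,\theta_N$-indiscernible over $a$ and whose first term agrees with $b$ on $\theta_0,\dots,\theta_N$,'' is not a single internal first-order property of $N$: the list of formulas it refers to grows with $N$, so you have a different sentence for each standard $N$ rather than one formula $\Phi(N)$ holding at every standard $N$. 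Overspill does not apply to such a schema. Fixing the formula list at a standard $m$ and overspilling in $N$ gives, for each $m$, a different nonstandard length and a different sequence; merging these into one sequence that is $\theta_m$-indiscernible for \emph{all} standard $m$ requires countable saturation in an enriched structure whose elements include hyperfinite sequences of tuples of $M$. The paper's $M$ is only an ultraproduct of finite $\la'$-structures, so ``internal sequence of nonstandard length,'' the internal sum $\sum_{i<K}|\varphi_\omega(x,\vec z_i)|$, and the internal pigeonhole are not available until that enrichment is set up, and your write-up does not set it up. (Without control of the type of $\vec z_i$ at \emph{every} internal index $i<K$, you also cannot guarantee that the index selected by the pigeonhole yields $b^\#\equiv_{\la,a}b$.)

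It is instructive to see how the paper's proof avoids all of this: it argues contrapositively. If no conjugate witnesses a dimension drop, countable saturation of $M$ yields a single \emph{standard} bound $K$ and a single formula $\chi\in\tp^M_{\la}(b/a)$ with $K|\varphi_\omega(x,b^\#)|\ge|\psi_\omega(x,a)|$ for every $b^\#\models\chi_\omega(x,a)$. One then needs only a standard finite family $(b_i)_{i<L}$ of realizations of $\chi_\omega$ with $\{\varphi(x,b_i)\}$ $k$-inconsistent (obtained exactly as in your second paragraph, via the realization lemma and model completeness, with no overspill), and for $L$ large relative to $K$ and $k$ the measure argument of Garc\'ia produces a point of $V_\omega^M$ lying in $k$ of the sets $\varphi_\omega(x,b_i)$, contradicting $k$-inconsistency. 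The saturation step that extracts the standard $K$ and the single formula $\chi$ is precisely the device that replaces your hyperfinite family; to salvage your direct route you must either import the internal-sequence machinery honestly or restructure the argument along these contrapositive lines.
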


\begin{proof}
Assume that no $b^\#$ exists as in the conclusion.  We then use that to get $K\in \mathbb N$ such that $K|\varphi_\omega(x,b^\#)|\geq |\psi_\omega(x,a)|$ for all $b^\#\in V_\omega^M$ with $b^\#\equiv_{\la,a}b$.  In fact, by saturation again, there is $\chi(x,a)\in \tp^M_{\la}(b/a)$ such that $K|\varphi_\omega(x,b^\#)|\geq |\psi_\omega(x,a)|$ for all $b^\#\models \chi_\omega(x,a)$.

Fix $L$ sufficiently large (depending only on $k$ and $K$) and take $(b_i)_{i<L}$ from $V_{<\omega}^M$ satisfying $\chi_\omega(x,a)$ 
and such that $\{\varphi(x,b_i) \ : \ i<L\}$ is $k$-inconsistent.  In particular, we have $K|\varphi_\omega(x,b_i)|\geq |\psi_\omega(x,a)|$ for all $i<L$.  As in \cite{2014arXiv1402.5212G}, if $L$ is sufficiently large, we get $i_1<\ldots<i_k<L$ such that $\bigcap_{j=1}^k \mu_{\psi_\omega}(\varphi_\omega(x,b_{i_j}))>0$.   In particular, there is $c\in V_\omega^M$ such that $M\models \varphi_\omega(c,b_{i_j})$ for all $j=1,\ldots,k$.  It follows that $M'\models \varphi(c,b_{i_j})$ for $j=1,\ldots,k$, a contradiction.
\end{proof}

In the previous result, if we have $\psi(x,a)$ and $\varphi(x,b)$ formulae with parameters from $N$ such that $\varphi(x,b)$ implies $\psi(x,a)$ and $\varphi(x,b)$ forks over $a$, then we can apply the previous result with $\psi(x,a')$ and $\varphi(x,b')$.  It should not be too surprising that, even in this situation, we need to look in $M'$ for the desired witness to dimension drop as $N$ is usually not saturated enough to see this dimension drop.

Combining the previous proposition with the remarks made in the previous paragraph yields the main result of this note:

\begin{thm}
  Suppose that $\psi(x,a)$ and $\phi(x,b)$ are existential $\la$-formulae with parameters from $N$ such that $\phi(x,b)$ implies $\psi(x,a)$ and $\phi(x,b)$ divides over $a$.  Then there is an elementary extension $M$ of $N$, an extension of $\delta_N$ to a quasidimension $\delta_M$ on $M$, and a $b^\#\in M$ with $b^\#\equiv_{\la,a}b$ and $\delta_M(\phi(x,b^\#))<\delta_M(\psi(x,a))=\delta_N(\psi(x,a))$.
\end{thm}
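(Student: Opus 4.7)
The plan is to reduce directly to the previous Proposition, essentially spelling out the remark in the paragraph preceding the Theorem. Take the elementary extension of $N$ in the conclusion to be $M'$ itself, which contains $N'\cong N$ elementarily via the map $a\mapsto a'$ from Lemma 2.3; identify $N$ with $N'$ so that the given parameters $a,b$ are viewed as $a',b'\in V_{<\omega}^M\subseteq V_\omega^M$. Because the embedding $N\hookrightarrow M'$ is elementary and dividing depends only on $\tp_T(b/a)$, the hypotheses carry over unchanged: $\phi(x,b')$ still implies $\psi(x,a')$ and divides over $a'$.

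With the parameters now lying in $V_\omega^M$, apply the previous Proposition to obtain $b^\#\in V_\omega^M\subseteq M'$ satisfying $b^\#\equiv_{\la,a'}b'$ together with the pseudofinite dimension drop $\delta_M(\phi_\omega(x,b^\#))<\delta_M(\psi_\omega(x,a'))$ computed in the ultraproduct $M$.

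To extend $\delta_N$ to a quasi-dimension on $M'$ (the $\delta_M$ appearing in the Theorem statement), I use the same recipe as in the Definition: for $X\subseteq M'$ defined by an existential formula $\phi(x,c)$ with parameters $c\in M'$, set $\delta_{M'}(X):=\delta_M(\phi_\omega(x,c))$. This agrees with $\delta_N$ on $N$-definable sets by construction, and the (weak) quasi-dimension axioms transfer from the pseudofinite $\delta_M$ to $\delta_{M'}$ in exactly the same way as in the proof of the additivity lemma for $\delta_N$. The Theorem then follows at once: $b^\#\in M'$ is $\la$-conjugate to $b$ over $a$ under the identification $N\cong N'$, and $\delta_{M'}(\phi(x,b^\#))<\delta_{M'}(\psi(x,a))=\delta_N(\psi(x,a))$.

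The one point requiring genuine verification is well-definedness of $\delta_{M'}$: if two existential $\la$-formulas $\phi(x,c)$ and $\psi(x,d)$ with parameters in $M'$ define the same subset of $M'$, then $\phi_\omega(x,c)$ and $\psi_\omega(x,d)$ must define the same subset of $V_\omega^M$. This is the natural analogue of Lemma 2.5 with parameters in $M'$ in place of $N$, and the same argument goes through, since $V_\omega^M\subseteq M'$ and $M'$ is existentially closed in $M$: any existential formula with $M'$-parameters has the same realizations in $V_\omega^M$ whether evaluated in $M'$ or in $M$, so the equivalence of $\phi$ and $\psi$ in $M'$ forces the equivalence of $\phi_\omega$ and $\psi_\omega$ in $M$.
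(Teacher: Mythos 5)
Your proposal is correct and follows essentially the same route as the paper, whose proof of the Theorem is exactly the reduction you describe: transport $a,b$ to $a',b'\in V_{<\omega}^M\subseteq V_\omega^M$ via the elementary embedding of Lemma 2.3, apply the preceding Proposition inside $M'$, and extend $\delta_N$ by the same $\varphi\mapsto\varphi_\omega$ recipe. Your extra verification of well-definedness of $\delta_{M'}$ via existential closedness of $M'$ in $M$ is a detail the paper leaves implicit, but it is the intended argument, not a different approach.
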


\begin{nrmk}
Note that a similar argument applies to an arbitrary relational language by taking $\la_0$ and an $\la_0$-structure $N$ and letting $T$ be the theory of the Morleyization of $N$.
\end{nrmk}

\begin{nrmk}
Note that the same construction applies, with only the obvious changes, to theories in continuous logic.
\end{nrmk}

\bibliographystyle{plain}
\bibliography{Pseudopseudofinite}
\end{document}